\newtheorem{proposition}{Proposition}[section]
\newtheorem{theorem}[proposition]{Theorem}
\newtheorem{definition}[proposition]{Definition}
\theoremstyle{remark}
\newtheorem*{notationa}{Dimensions $d$ and $m$}
\newtheorem*{notationb}{Ambient Space $\mathscr{X}$}
\newtheorem*{notationc}{Subspace $E\subseteq\mathscr{X}$}
\newtheorem*{notationd}{Integral Operator $\Theta_E$}
\begin{document}

\title[Square Function Estimates on Uniformly Rectifiable Sets]
{Square Function Estimates in Spaces of Homogeneous Type and on 
Uniformly Rectifiable Euclidean Sets}    
\footnotetext[1]{The work of the authors has been supported in part by 
the US NSF and the Simons Foundation}
\subjclass[2010]{Primary: 28A75, 42B20; Secondary: 28A78, 42B25, 42B30}
\keywords{Square function, quasi-metric space,
space of homogeneous type, Ahlfors--David regular, singular integral operator, local $T(b)$
theorem for the square function, uniformly rectifiable set, tent space, variable coefficient kernel}

\author[Hofmann]{Steve Hofmann}
\author[Mitrea]{Dorina Mitrea}
\author[Mitrea]{Marius Mitrea}
\author[Morris]{Andrew J. Morris}

\address{Steve Hofmann\\ University of Missouri\\ Columbia \\ MO 65211\\ USA}
\email{hofmanns@missouri.edu}

\address{Dorina Mitrea\\ University of Missouri\\ Columbia \\ MO 65211\\ USA}
\email{mitread@missouri.edu}

\address{Marius Mitrea\\ University of Missouri\\ Columbia \\ MO 65211\\ USA}
\email{mitream@missouri.edu}

\address{Andrew J. Morris\\ University of Missouri\\ Columbia \\ MO 65211\\ USA}
\email{morrisaj@missouri.edu}

\date{\today}

\begin{abstract}
We announce a local $T(b)$ theorem, an inductive scheme, and $L^p$ extrapolation 
results for $L^2$ square function estimates related to the analysis of integral 
operators that act on Ahlfors--David regular sets of arbitrary codimension in 
ambient quasi-metric spaces. The inductive scheme is a natural application of 
the local $T(b)$ theorem and it implies the stability of $L^2$ square function 
estimates under the so-called big pieces functor. In particular, this analysis 
implies $L^p$ and Hardy space square function estimates for integral operators 
on uniformly rectifiable subsets of the Euclidean space. 
\end{abstract}

\maketitle

\section{Introduction}\label{Sect:1}

This work is motivated by the $L^2$ square function estimates 
proved by G.~David and S.~Semmes in \cite{DaSe91}, \cite{DaSe93},
for convolution type integral operators associated with the Riesz kernels 
$x_j/|x|^{n+1}$, $1\leq j\leq n+1$, on uniformly rectifiable sets in 
${\mathbb{R}}^{n+1}$. Our main aim here is to extend their results in a 
number of directions, including the consideration of a larger class of 
kernels, and establishing $L^p$ as well as Hardy type square function estimates.
A substantial portion of our analysis is valid in the general setting of abstract 
quasi-metric spaces (which automatically forces the consideration of 
non-convolution type kernels), and we succeed in dealing with Ahlfors-David 
regular sets of arbitrary codimension in that general setting. 
We announce three results obtained in this setting. The starting point is a 
local $T(b)$ theorem for square functions which is then used to prove an inductive 
scheme whereby square function estimates are shown to be stable under the 
so-called big pieces functor. The third result is an extrapolation principle 
whereby an $L^p$ (or weak-$L^p$, or Hardy space $H^p$) square function estimate
for one $p$ yields a full range of square function bounds. 
We also indicate how the inductive scheme can be applied to obtain 
square function estimates for a large class of integral operators that act 
on uniformly rectifiable sets of codimension one in Euclidean space. 

To formulate the latter result, we need to introduce some notation and recall 
some definitions. Following \cite{DaSe91}, \cite{DaSe93}, a closed set ${\Sigma}\subseteq{\mathbb{R}}^{n+1}$ is called {\tt uniformly} 
{\tt rectifiable} if it is $n$-dimensional Ahlfors-David regular 
and has big pieces of Lipschitz images. The property of being 
$n$-{\tt dimensional} {\tt Ahlfors-David} {\tt regular} (ADR for short) means 
that there exists a constant $C\in[1,\infty)$ such that
\begin{align}\label{Q3HFiii}
C^{-1}\,r^n\leq{\mathcal{H}}^n\bigl(\Sigma\cap B(x,r)\bigr)\leq C\,r^n,
\quad\forall\,x\in {\Sigma},\,\,\,\,\mbox{for every finite }\,r
\in(0,{\rm diam}(\Sigma)],
\end{align}
where $B(x,r):=\big\{y\in{\mathbb{R}}^{n+1}:\,|x-y|<r\big\}$, 
${\rm diam}(\Sigma):=\sup\,\bigl\{|x-y|:\,x,y\in\Sigma\bigr\}\in(0,\infty]$, 
and ${\mathcal{H}}^n$ denotes the $n$-dimensional Hausdorff measure in 
${\mathbb{R}}^{n+1}$. The property of having {\tt big} {\tt pieces} 
{\tt of} {\tt Lipschitz} {\tt images} means that there exist constants 
$\eta,C\in(0,\infty)$ such that for 
each $x\in{\Sigma}$ and each finite $r\in(0,{\rm diam}(\Sigma)]$, there is a 
ball $B^{n}_r$ of radius $r$ in ${\mathbb{R}}^{n}$ and a Lipschitz map 
$\varphi:B^{n}_r\rightarrow {\mathbb{R}}^{n+1}$ with Lipschitz constant 
at most equal to $C$, such that
\begin{align}\label{3.1.9aS}
{\mathcal{H}}^{n}\bigl({\Sigma}\cap B(x,r)\cap\varphi(B^{n}_r)\bigr)\geq\eta\,r^n.
\end{align}
We now state our principal result in the Euclidean context, dealing with 
Hardy and $L^p$ square function estimates on uniformly rectifiable sets.

\begin{theorem}\label{UR-rest}
Suppose that $K$ is a real-valued function with the following properties:
\begin{equation}\label{K-BIS}
K\in C^2({\mathbb{R}}^{n+1}\setminus\{0\}),\quad
K\mbox{ is odd},\quad K(\lambda x)=\lambda^{-n}K(x)
\quad\forall\,\lambda>0,\,\,\,\forall\,x\in{\mathbb{R}}^{n+1}\setminus\{0\}.
\end{equation}
Let $\Sigma$ be a uniformly rectifiable subset of ${\mathbb{R}}^{n+1}$ and  
let $\sigma:={\mathcal{H}}^n\lfloor\Sigma$ denote the restriction 
of the $n$-dimensional Hausdorff measure in ${\mathbb{R}}^{n+1}$ to $\Sigma$.
For each $p\in\bigl(\frac{n}{n+1},\infty\bigr)$ let $H^p(\Sigma,\sigma)$ stand 
for the Lebesgue scale $L^p(\Sigma,\sigma)$ if $p\in(1,\infty)$, and the 
Coifman-Weiss scale of Hardy spaces on the space of homogeneous 
type $(\Sigma,|\cdot-\cdot|,\sigma)$ if $p\in\bigl(\frac{n}{n+1},1\bigr]$. 
Finally, consider the integral operator ${\mathcal{T}}$ acting on functions 
$f\in L^p(\Sigma,\sigma)$, $p\in(1,\infty)$, according to  
\begin{align}\label{T-BIS.2a}
{\mathcal{T}}f(x):=\int_\Sigma K(x-y)f(y)\,d{\sigma}(y),\qquad 
\forall\,x\in{\mathbb{R}}^{n+1}\setminus \Sigma.
\end{align}
Then the operator ${\mathcal{T}}$ extends to $H^p(\Sigma,\sigma)$ and there 
exists $C\in(0,\infty)$ such that
\begin{align}\label{SF-BIS-10}
\left\|\Bigl(\int_{\Gamma_{\kappa}(x)}|(\nabla{\mathcal{T}} f)(y)|^2\,
\frac{dy}{{\rm dist}\,(y,\Sigma)^{n-1}}\Bigr)^{\frac{1}{2}}
\right\|_{L^p_x(\Sigma,\sigma)}\!\!\!\leq C\|f\|_{H^p(\Sigma,\sigma)},
\quad\forall\,f\in H^p(\Sigma,\sigma),
\end{align}
where $\Gamma_\kappa(x):=\bigl\{y\in{\mathbb{R}}^{n+1}\setminus {\Sigma}:\,
|x-y|<(1+\kappa)\,{\rm dist}\,(y,\Sigma)\bigr\}$, for each $x\in {\Sigma}$.

As a corollary, the following $L^2$ square function estimate holds
\begin{align}\label{SF-BIS-ann}
\int_{{\mathbb{R}}^{n+1}\setminus \Sigma}|(\nabla{\mathcal{T}}f)(x)|^2
\,{\rm dist}\,(x,\Sigma)\,dx\leq C\int_\Sigma|f(x)|^2\,d\sigma(x),
\quad\forall\,f\in L^2(\Sigma,\sigma).
\end{align}
\end{theorem}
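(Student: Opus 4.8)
The strategy is to first establish the $L^2$ estimate \eqref{SF-BIS-ann} by feeding Lipschitz graphs into the inductive scheme via the local $T(b)$ theorem, and then to bootstrap to the full Hardy/Lebesgue estimate \eqref{SF-BIS-10} by the $L^p$ extrapolation principle. First I would record the standard Fubini (tent space) identity: integrating the conical functional in \eqref{SF-BIS-10} with $p=2$ over $\Sigma$ and interchanging the order of integration, the Ahlfors--David regularity \eqref{Q3HFiii} at the scale $r\approx{\rm dist}(y,\Sigma)$ gives $\sigma(\{x\in\Sigma:\,y\in\Gamma_\kappa(x)\})\approx{\rm dist}(y,\Sigma)^n$, so \eqref{SF-BIS-10} for $p=2$ is equivalent, up to constants depending only on $n$, $\kappa$, and the ADR constant, to \eqref{SF-BIS-ann}. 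Once \eqref{SF-BIS-ann} is in hand, the $L^p$ extrapolation principle simultaneously yields the extension of $\mathcal{T}$ to $H^p(\Sigma,\sigma)$ and the bound \eqref{SF-BIS-10} for every $p\in(\frac{n}{n+1},\infty)$. Thus everything reduces to \eqref{SF-BIS-ann}.

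To prove \eqref{SF-BIS-ann} I would recast its left-hand side as the $L^2(\sigma)$ norm of an operator of the abstract type in the general framework, with ambient space $\mathscr{X}={\mathbb{R}}^{n+1}$, subspace $E=\Sigma$, and vertical approximations whose Schwartz kernel at a point $x$ with $t\approx{\rm dist}(x,\Sigma)$ is $t\,(\nabla_x K)(x-\cdot)$. The hypotheses \eqref{K-BIS} guarantee that this is an admissible variable-coefficient Calder\'on--Zygmund kernel, uniformly over $\Sigma$: the $(-n)$-homogeneity of $K$ gives the size bound $|(\nabla_x K)(x-y)|\lesssim|x-y|^{-(n+1)}$, and $K\in C^2$ yields the needed H\"older regularity and, with homogeneity, the off-diagonal decay (the absolute convergence of $\mathcal{T}f(x)$ for $f\in L^2(\Sigma,\sigma)$ and $x\notin\Sigma$ is an elementary consequence of ADR). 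Since $\Sigma$ is uniformly rectifiable it is ADR with big pieces of Lipschitz images, and by the David--Semmes structure theory these big pieces may be taken to be portions of Lipschitz graphs, with parameters controlled by $\eta$ and the constants in \eqref{Q3HFiii}--\eqref{3.1.9aS}. The inductive scheme---stability of $L^2$ square function estimates under the big pieces functor---then reduces \eqref{SF-BIS-ann} to the case in which $\Sigma$ is a Lipschitz graph.

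On a Lipschitz graph I would invoke the local $T(b)$ theorem with the constant choice $b_Q\equiv 1$ on each dyadic cube $Q$ of $\Sigma$; these functions are trivially para-accretive, so what remains is the Carleson-measure estimate
\begin{equation*}
\int\!\!\!\int_{\widehat{Q}}|(\nabla\mathcal{T}1)(x)|^2\,{\rm dist}(x,\Sigma)\,dx\lesssim\sigma(Q)
\end{equation*}
over the Carleson boxes $\widehat{Q}$ above $Q$. The mechanism is that when $\Sigma$ is a hyperplane the oddness of $K$ renders the principal-value integral defining $\mathcal{T}1$ convergent while the $(-n)$-homogeneity makes it locally constant, so $\nabla\mathcal{T}1\equiv 0$ in the flat case; on a Lipschitz graph one recovers the displayed bound by comparing, at each scale, $\Sigma$ with its best-approximating hyperplane and using the $C^2$ control on $K$ together with the smallness of the deviation (after a pigeonholing of scales reducing to a small Lipschitz constant), which dominates $\nabla\mathcal{T}1$ by a square function of Jones $\beta$-number type---manifestly Carleson for a Lipschitz graph. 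Equivalently, this is the statement that $\mathcal{T}1\in{\rm BMO}(\Sigma,\sigma)$ with a quantitative Carleson gradient bound, available for Lipschitz graphs from the circle of ideas around David's theorem and Coifman--McIntosh--Meyer.

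I expect the principal obstacle to be exactly this last point: extracting, uniformly in the Lipschitz constant, the cancellation hidden in the oddness of $K$ and the near-flatness of the graph at small scales, so as to verify the local $T(b)$ hypothesis with $b\equiv 1$. A secondary, more bookkeeping-type difficulty is to confirm that the Euclidean operator $\mathcal{T}$, once localized and restricted to the big pieces furnished by uniform rectifiability, genuinely falls within the scope of the abstract local $T(b)$ theorem and of the inductive scheme---that the kernel bounds, the ADR constants, and the square-function normalizations all remain under uniform control---so that the functorial machinery applies cleanly.
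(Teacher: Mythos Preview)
Your overall architecture---first prove \eqref{SF-BIS-ann}, then extrapolate via the Fubini identity and Theorem~\ref{VGds-2-ann} to obtain \eqref{SF-BIS-10}---matches the paper exactly. The genuine gap is in your reduction of \eqref{SF-BIS-ann} to the Lipschitz graph case. You assert that ``by the David--Semmes structure theory these big pieces may be taken to be portions of Lipschitz graphs,'' i.e., that uniform rectifiability implies big pieces of Lipschitz \emph{graphs} (BPLG). This is not known: the definition gives only big pieces of Lipschitz \emph{images}, and the implication UR $\Rightarrow$ BPLG is a well-known open problem. What the paper uses instead is the Azzam--Schul result \cite{AS} that UR $\Leftrightarrow$ (BP)$^2$LG, i.e., big pieces of big pieces of Lipschitz graphs. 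One then applies the inductive scheme (Theorem~\ref{Thm:BPSFtoSF}) \emph{twice}: the Lipschitz graph estimate yields BPSFE for any set with BPLG, and a second pass yields SFE for any set with (BP)$^2$LG, hence for $\Sigma$. Your single application of the big-pieces functor does not close the loop.

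A secondary difference: for the Lipschitz graph base case you propose to run the local $T(b)$ theorem with $b_Q\equiv 1$ and verify the Carleson condition for $\nabla\mathcal{T}1$ via a $\beta$-number/flat-approximation argument. The paper does not do this; it simply imports the Lipschitz graph square function estimate from prior work (\cite{CMcM}, \cite{Ho}, \cite{HL}) and reserves the local $T(b)$ machinery for the inductive step (where the $b_Q$'s are the characteristic functions ${\mathbf 1}_{Q\cap E_Q}$ of the big pieces, as in \eqref{BBss-ann}). Your $T(1)$ route on Lipschitz graphs is plausible in spirit, but the sketch you give (oddness $\Rightarrow$ $\nabla\mathcal{T}1\equiv 0$ on hyperplanes, then perturb) still requires real work to produce a uniform Carleson bound for general odd $C^2$ homogeneous kernels, and in any case it is not the mechanism the paper relies on.
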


The fact that \eqref{SF-BIS-ann} is a consequence of \eqref{SF-BIS-10} with $p=2$
follows from Fubini's theorem and the ADR property of $\Sigma$. This being said, 
our proof of \eqref{SF-BIS-10} begins by first establishing \eqref{SF-BIS-ann}
and then deriving \eqref{SF-BIS-10} using arguments akin to Calder\'on-Zygmund 
theory which, in effect, indicate that deriving Hardy and $L^p$ 
square-function estimates is not unlike proving boundedness results 
for singular integral operators. That \eqref{SF-BIS-ann} holds in 
the case when ${\mathcal{T}}$ is associated as in \eqref{T-BIS.2a} with 
each of the Riesz kernels $K_j(x):=x_j/|x|^{n+1}$, $1\leq j\leq n+1$, is due 
to David and Semmes \cite{DaSe93}. Here we generalize their result in two basic 
aspects, by considering any kernel satisfying the purely real-variable 
conditions in \eqref{K-BIS} (which are not tied to any 
particular partial differential operator, in the manner that the kernels $K_j(x):=x_j/|x|^{n+1}$, $1\leq j\leq n+1$, are related to the Laplacian), 
and by considering the entire Hardy-Lebesgue scale (and not just $L^2$). 
In fact, it is also possible to obtain a version of Theorem~\ref{UR-rest} for variable 
coefficient kernels, which ultimately applies to integral operators 
on manifolds that are associated with the Schwartz kernels of certain classes of pseudodifferential operators acting between vector bundles, although we shall 
not discuss this further here. We sketch the proof of Theorem~\ref{UR-rest} 
in Sections~\ref{SS2.2}-\ref{SS2.3}.

It is both useful and instructive to separate the portion of the 
proof of Theorem~\ref{UR-rest}
which makes essential use of Euclidean tools from the portion which works in the
general context of quasi-metric spaces. As regards the first portion, two ingredients
are essential for our approach, namely the fact that the square function estimate
\eqref{SF-BIS-ann} holds when $\Sigma$ is a Lipschitz graph in ${\mathbb{R}}^{n+1}$,
and the recent characterization of uniformly rectifiable subsets of
${\mathbb{R}}^{n+1}$ proved by J.~Azzam and R.~Schul in \cite{AS} in terms of 
the two-fold iteration of the so-called big pieces functor starting with Lipschitz
graphs. Concerning the second portion of the proof, mentioned earlier, it is 
remarkable that a large number of significant results, including a local $T(b)$
theorem for square functions, a geometrically inductive scheme showing that 
$L^2$ square function estimates are stable under the action of the big pieces functor,
and extrapolation results for square function estimates, make no specific use of the
vector space structure of the ambient space (as well as any other beneficial aspect which
such a setting entails). As such, the aforementioned results can be developed 
abstractly in the general context of quasi-metric spaces. This point of view 
is systematically pursued in Section~\ref{Sect:2}.

\section{Main Results}\label{Sect:2}

We begin by discussing the abstract setting which constitutes the foundation
of most of our results in this paper. The following notation and assumptions 
are fixed and used without specific reference hereafter, unless otherwise specified.

\vspace{6pt}

\begin{notationa}
Let $d$ and $m$ denote two real numbers such that $0<d<m<\infty$.\vspace{6pt}
\end{notationa}

\begin{notationb}
Let $({\mathscr{X}},\rho,\mu)$ denote an $m$-dimensional 
Ahlfors-David regular ($m$-ADR) space. This is defined to mean that 
${\mathscr{X}}$ is a set of cardinality at least two, equipped with a 
quasi-distance $\rho$, and a Borel measure $\mu$ with the property 
that all $\rho$-balls are $\mu$-measurable, and for which there exists 
a constant $C\in[1,\infty)$ such that
\begin{align}\label{Q3HF}
C^{-1}\,r^m\leq\mu\bigl(B_\rho(x,r)\bigr)\leq C\,r^m,
\quad\forall\,x\in {\mathscr{X}},\,\,\,\,\mbox{for every finite }\,r
\in(0,{\rm diam}_\rho({\mathscr{X}})],
\end{align}
where $B_\rho(x,r) := \{y\in \mathscr{X} : \rho(x,y)<r\}$, and 
${\rm diam}_\rho({\mathscr{X}}) :=\sup\,\bigl\{\rho(x,y):\,x,y\in \mathscr{X}\bigr\}\in(0,\infty]$. The constant $C$ in \eqref{Q3HF} 
will be referred to as the ADR constant of ${\mathscr{X}}$. The quasi-distance $\rho:{\mathscr{X}}\times{\mathscr{X}}\to[0,\infty)$ has the property that 
there exist two constants $\widetilde{C}_\rho,C_\rho\in[1,\infty)$ such that
\begin{align}\label{gabn-T.2-ann}
\rho(x,y)=0\Leftrightarrow x=y,\quad
\rho(y,x)\leq \widetilde{C}_\rho\,\rho(x,y),\quad
\rho(x,y)\leq C_\rho\max\{\rho(x,z),\rho(z,y)\},
\end{align}
for all $x,y,z\in\mathscr{X}$, and we introduce the related constant
\begin{align}\label{Cro-ann}
\alpha_\rho:=\frac{1}{\log_2 C_\rho}\in(0,\infty].
\end{align}
The measure $\mu$ is Borel with respect to topology $\tau_\rho$ canonically 
induced by $\rho$, which is defined to be the largest topology on 
${\mathscr{X}}$ with the property that for each point $x\in{\mathscr{X}}$ 
the family $\{B_\rho(x,r)\}_{r>0}$ is a fundamental system of 
neighborhoods of $x$.
\end{notationb}

\begin{notationc}
Let $(E,\sigma)$ consist of a closed subset $E$ of $({\mathscr{X}},\tau_\rho)$ 
and a Borel regular measure~$\sigma$ on $(E,\tau_{\rho|_{E}})$ with the 
property that $(E,\rho\bigl|_E,\sigma)$ is a $d$-dimensional {\rm ADR} space, 
where $\rho|_{E}$ denotes the restriction of $\rho$ to $E\times E$. 
From \cite{MMMM-G} (which extends work in \cite{MaSe79}) we know that, in this context, 
there exists a symmetric quasi-distance $\rho_{\#}$ on ${\mathscr{X}}$, 
called the regularization of $\rho$, such that $\rho_{\#}$ and $\rho$ are 
equivalent quasi-metrics, the topology $\tau_{\rho_{\#}}=\tau_{\rho}$, 
and the regularized distance
\begin{align}\label{REG-DDD}
\delta_E(x):=\inf\,\big\{{\rho_{\#}}(x,y):\,y\in E\big\},
\quad\forall\,x\in{\mathscr{X}},
\end{align}
is continuous on $({\mathscr{X}},\tau_\rho)$.

It is also well-known (see \cite{Christ}, \cite{David1988}) that in this context 
there exists a dyadic cube structure on~$E$. In particular, fix $\kappa_E$ in ${\mathbb{Z}}\cup\{-\infty\}$ with the property that 
$2^{-\kappa_E-1}< {\rm diam}_\rho(E)\leq 2^{-\kappa_E}$. For each integer $k\geq\kappa_E$, a collection ${\mathbb{D}}_k(E):=\{Q_\alpha^k\}_{\alpha\in I_k}$ 
of subsets $Q_\alpha^k$ of $E$, indexed by a nonempty and at most countable 
set $I_k$ of indices $\alpha$, is fixed such that the entire collection 
\begin{align}\label{gcEd}
{\mathbb{D}}(E):=\bigcup_{k\in{\mathbb{Z}},\,k\geq\kappa_E}{\mathbb{D}}_k(E)
\end{align}
has properties analogous to the ordinary dyadic cube structure of $\mathbb{R}^n$. 
We refer to the sets $Q$ in ${\mathbb{D}}_k(E)$ as dyadic cubes with side length $l(Q):=2^{-k}$. We stress that all quantitative aspects pertaining to 
${\mathbb{D}}(E)$ are controlled in terms of the {\rm ADR} constants of $E$ 
(as well as on ${\rm diam}_\rho(E)$ when $E$ is bounded).

There also exists a Whitney covering of $\mathscr{X}\setminus E$ 
(see for instance \cite{MMMM-G}), since $(\mathscr{X},\rho)$ is geometrically 
doubling and $E$ is closed in $(\mathscr{X},\tau_\rho)$. We use this covering to 
associate to each cube $Q$ in ${\mathbb{D}}(E)$ the region
$\mathcal{U}_Q\subseteq\mathscr{X}\setminus E$ which is the union of all Whitney 
cubes of size comparable to $\ell(Q)$ located at a distance less than or equal to
 $C\ell(Q)$ from $Q$. 
Intuitively, the reader should think of these as being analogous to the upper halves 
of ordinary Carleson regions in the Euclidean upper half-space. In particular, 
the dyadic Carleson tent $T_E(Q)$ over $Q$ (relative to the set $E$) 
may now be defined as
\begin{align}\label{gZSZ-3}
T_E(Q):=\bigcup_{Q'\in{\mathbb{D}}(E),\,\,Q'\subseteq Q}{\mathcal{U}}_{Q'}.
\end{align}
\end{notationc}

\begin{notationd}
Let ${\theta}:(\mathscr{X}\times\mathscr{X})
\setminus\{(x,x):\,x\in \mathscr{X}\}\longrightarrow{{\mathbb{R}}}$
denote a Borel measurable function, with respect to the product topology 
$\tau_\rho\times\tau_\rho$, for which there exist finite positive constants 
$C_{\theta}$, $\alpha$, $\upsilon$ such that for all $x,y\in\mathscr{X}$ 
with $x\neq y$ the following hold:
\begin{eqnarray}\label{hszz-A-ann}
&& \mbox{[{\tt decay condition:}]}\quad
|{\theta}(x,y)|\leq\frac{C_{\theta}}{\rho(x,y)^{d+\upsilon}},
\\[4pt]
&& \mbox{[{\tt H\"older regularity:}]}\quad
|{\theta}(x,y)-{\theta}(x,\widetilde{y})|\leq C_{\theta} \frac{\rho(y,\widetilde{y})^\alpha}{\rho(x,y)^{d+\upsilon+\alpha}}, 
\nonumber\\[4pt]
&&\hskip 1.70in
\forall\,\widetilde{y}\in\mathscr{X}\setminus\{x\}\,\,\mbox{ with }\,\,
\rho(y,\widetilde{y})\leq\tfrac{1}{2}\rho(x,y).\quad
\label{hszz-3-A-ann}
\end{eqnarray}
The integral operator $\Theta_E$ is then defined for all functions 
$f\in L^p(E,\sigma)$, $1\leq p\leq\infty$, by
\begin{align}\label{operator-A-ann}
(\Theta_E f)(x):=\int_E {\theta}(x,y)f(y)\,d\sigma(y),
\qquad\forall\,x\in\mathscr{X}\setminus E.
\end{align}
\end{notationd}

We proceed to describe our main tools in the treatment of square function estimates.

\subsection{An arbitrary codimension local $T(b)$ theorem for square functions}
\label{SS2.1}

The local $T(b)$ theorem below states that a global square function estimate for 
the integral operator $\Theta_E$ holds if there exists a family of suitably 
non-degenerate and normalized functions $\{b_Q\}_{Q\in{\mathbb{D}}(E)}$ with 
the property that, for each $Q\in{\mathbb{D}}(E)$, a uniform, scale-invariant, 
local version of the $L^2$ square function estimate on the dyadic Carleson tent 
$T_E(Q)$ holds for $\Theta_E$ acting on $b_Q$. Naturally, the formulation of 
the aforementioned square function estimates takes into account both the 
co-dimension $m-d$ of $E$ in ${\mathscr{X}}$, and the exponent $\upsilon$ 
intervening in the decay condition \eqref{hszz-A-ann}.

Here is the formal statement of our first main result which generalizes a 
Euclidean codimension one version that was implicit in the solution of the 
Kato problem in \cite{AHLMcT}, \cite{HLMc}, \cite{HMc}, and later formulated 
explicitly in \cite{Au}, \cite{Ho3}, \cite{HMc2}. 

\begin{theorem}\label{Thm:localTb}
If there exist two constants $C_0\in[1,\infty)$, $c_0\in(0,1]$, and a 
collection $\{b_Q\}_{Q\in{\mathbb{D}}(E)}$ of $\sigma$-measurable functions $b_Q:E\rightarrow{\mathbb{C}}$ such that for each $Q\in{\mathbb{D}}(E)$ 
the following hold:
\begin{enumerate}
\item[(1)] $\int_E |b_Q|^2\,d\sigma\leq C_0\sigma(Q)$;
\item[(2)] there exists $\widetilde{Q}\in{\mathbb{D}}(E)$, $\widetilde{Q}\subseteq Q$,
$\ell(\widetilde{Q})\geq c_0\ell(Q)$, and 
$\left|\int_{\widetilde{Q}}b_Q\,d\sigma\right|\geq\frac{1}{C_0}\,\sigma(\widetilde{Q})$;
\item[(3)] $\int_{T_E(Q)}|(\Theta\,b_Q)(x)|^2
\delta_E(x)^{2\upsilon-(m-d)}\,d\mu(x)\leq C_0\sigma(Q)$,
\end{enumerate}
then there exists a constant $C\in(0,\infty)$, depending only on $C_0$, 
$C_{\theta}$, and the {\rm ADR} constants of $E$ and ${\mathscr{X}}$, 
as well as on ${\rm diam}_\rho(E)$ when $E$ is bounded, such that
\begin{align}\label{G-UF-2}
\int_{\mathscr{X}\setminus E}\big|(\Theta_E f)(x)\big|^2
\delta_E(x)^{2\upsilon-(m-d)}\,d\mu(x)\leq C\int_E|f(x)|^2\,d\sigma(x),
\end{align}
for all $f\in L^2(E,\sigma)$.
\end{theorem}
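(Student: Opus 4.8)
The plan is to establish the global square function estimate \eqref{G-UF-2} via a stopping-time decomposition combined with an $L^2(\mu)$ Carleson-measure argument, following the now-standard template for local $T(b)$ theorems for square functions. First I would reduce matters, by a routine limiting argument, to proving a uniform bound on Carleson tents: it suffices to show that there is a finite constant $C$ so that for every $Q_0\in{\mathbb{D}}(E)$ one has $\int_{T_E(Q_0)}|(\Theta_E f)(x)|^2\delta_E(x)^{2\upsilon-(m-d)}\,d\mu(x)\leq C\,\|f\|_{L^2(E,\sigma)}^2$ with $C$ independent of $Q_0$, and then let $Q_0$ exhaust $E$ (or, when $E$ is bounded, handle the single top cube directly). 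Fixing such a $Q_0$, the key device is a stopping-time family: starting from $Q_0$, select the maximal dyadic subcubes $Q_j\subsetneq Q_0$ for which the averaged action of $\Theta_E$ on $b_{Q_0}$, measured against the tent functional in (3), first exceeds a large threshold $\varepsilon^{-1}$, or for which $\big|\sigma(Q_j)^{-1}\int_{Q_j}b_{Q_0}\,d\sigma\big|$ first drops below a small threshold $\varepsilon$, or for which the $L^2$-average of $b_{Q_0}$ over $Q_j$ first exceeds $\varepsilon^{-1}$. By hypotheses (1)--(3) and the ADR property, choosing the parameters correctly forces the stopping cubes to carry only a small fraction of the mass of $Q_0$, i.e.\ $\sum_j\sigma(Q_j)\leq(1-\eta)\sigma(Q_0)$ for some $\eta=\eta(\varepsilon,C_0)\in(0,1)$; this is the mechanism that ultimately makes the iteration converge.

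Next I would use this stopping-time structure to build a Carleson measure and invoke a quantitative version of the John--Nirenberg / Carleson-embedding principle adapted to spaces of homogeneous type. On the "good" region $Q_0\setminus\bigcup_j Q_j$ the function $b_{Q_0}$ behaves like a pseudo-accretive function: its average is nondegenerate and its local $L^2$-norm is controlled, while the tent-localized square function of $\Theta_E b_{Q_0}$ is small there. One then writes, for a generic $f\in L^2(E,\sigma)$ supported in $Q_0$, a telescoping/martingale-type decomposition of $f$ relative to the cubes $\{Q_j\}$ and their descendants, with the dyadic-cube structure on $E$ and the Whitney regions $\mathcal{U}_Q$ furnishing a coherent bookkeeping of scales. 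The decay condition \eqref{hszz-A-ann} and the Hölder regularity \eqref{hszz-3-A-ann} supply the off-diagonal decay needed to estimate the interaction between the contribution of $f$ on a cube $Q$ and the tent region $T_E(Q')$ for $Q'$ of very different size or far away; schematically, the kernel bound gives
\begin{align}\label{kernel-decay-plan}
\int_{\mathcal{U}_{Q'}}|(\Theta_E(f\mathbf{1}_Q))(x)|^2\delta_E(x)^{2\upsilon-(m-d)}\,d\mu(x)
\lesssim \Big(\tfrac{\min\{\ell(Q),\ell(Q')\}}{\max\{\ell(Q),\ell(Q')\}}\Big)^{2\alpha}\,
\frac{\mathrm{dist}(Q,Q')^{-2\alpha}\,\ell(Q)^{\,d}\, \cdots}{\cdots}\,\|f\mathbf{1}_Q\|_{L^2}^2,
\end{align}
where the precise powers are dictated by $d$, $\upsilon$, and $\alpha$; summing these geometric series using ADR reduces the full estimate to the "diagonal" terms, for which hypothesis (3) (after transference via the non-degeneracy in (2) and the $L^2$-bound in (1)) does the work. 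A key technical point here is that one does not directly control $\Theta_E f$ but rather $\Theta_E$ acting on the accretive functions $b_Q$; the passage from $b_Q$ back to $f$ uses the stopping-time selection to replace $f$ on each stopping cube by a suitable multiple of $b_{Q_j}$ up to an error that is small in $L^2$, then iterates.

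The iteration then proceeds: having handled $Q_0$ modulo its stopping cubes $\{Q_j\}$, one repeats the entire argument with each $Q_j$ in place of $Q_0$ and $b_{Q_j}$ in place of $b_{Q_0}$; because $\sum_j\sigma(Q_j)\leq(1-\eta)\sigma(Q_0)$, the total contribution over $N$ generations decays geometrically and the series converges, yielding the bound on $T_E(Q_0)$ with a constant independent of $Q_0$. Finally, letting $Q_0\uparrow E$ (or treating the top cube when $E$ is bounded) and using the monotone convergence theorem gives \eqref{G-UF-2} for all $f\in L^2(E,\sigma)$, with the stated dependence of $C$ on $C_0$, $C_\theta$, the ADR constants of $E$ and $\mathscr{X}$, and on ${\rm diam}_\rho(E)$ in the bounded case. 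The main obstacle, I expect, is the off-diagonal / non-convolution analysis: because $\mathscr{X}$ is merely a quasi-metric space of homogeneous type and $\theta$ is a general variable kernel (not a convolution), one cannot lean on Fourier-analytic or translation-invariant tools, so all the orthogonality must be squeezed out of the quantitative Hölder/decay conditions \eqref{hszz-A-ann}--\eqref{hszz-3-A-ann} together with the geometry of the dyadic cubes and Whitney regions; making the stopping-time mass estimate $\sum_j\sigma(Q_j)\leq(1-\eta)\sigma(Q_0)$ quantitatively uniform over all of $\mathbb{D}(E)$, in the arbitrary-codimension setting with the weight $\delta_E^{\,2\upsilon-(m-d)}$, is the delicate heart of the proof.
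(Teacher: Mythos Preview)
The paper is an announcement and contains no proof of this theorem; it only states the result and points to the Euclidean codimension-one antecedents implicit in \cite{AHLMcT}, \cite{HLMc}, \cite{HMc} and made explicit in \cite{Au}, \cite{Ho3}, \cite{HMc2}. There is therefore nothing in the paper to compare your proposal against directly.

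That said, your outline tracks the template of those references: a stopping-time selection on ${\mathbb{D}}(E)$, a packing estimate $\sum_j\sigma(Q_j)\leq(1-\eta)\sigma(Q_0)$ extracted from hypotheses (1)--(3), and an iteration over generations. One point worth tightening: in the standard argument for square functions the main reduction is not a decomposition of $f$ across cubes with off-diagonal interactions, but rather a splitting $\Theta_E f=(\Theta_E-A_t\cdot\Theta_E 1)f+(\Theta_E 1)\,A_t f$, where $A_t$ is a dyadic averaging at scale $t\approx\delta_E$. The first term is handled by almost-orthogonality driven by the H\"older condition \eqref{hszz-3-A-ann} (this is where the quasi-metric, variable-kernel analysis you flag as the main obstacle actually lives), while the second term reduces, via Carleson embedding, to showing that $|(\Theta_E 1)(x)|^2\delta_E(x)^{2\upsilon-(m-d)}\,d\mu(x)$ is a Carleson measure on ${\mathscr{X}}\setminus E$. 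It is \emph{this} Carleson estimate that the stopping-time construction on the $b_Q$'s proves, and it is here that conditions (1)--(3) are spent. Your description conflates these two stages somewhat (the schematic interaction bound you wrote belongs to the first stage, the stopping-time and iteration to the second), but the ingredients you list are the right ones.
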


\subsection{An inductive scheme for square function estimates}\label{SS2.2}

The inductive scheme in the theorem below shows that the integral operator 
$\Theta_E$ satisfies square function estimates whenever the set $E$ contains 
(uniformly, at all scales and locations) so-called big pieces of sets on which 
square function estimates hold. In short, we say that big pieces of square 
function estimates (BPSFE) imply square function estimates (SFE). We sketch the 
proof of this result, since it is a natural application of our local $T(b)$ theorem, 
and then indicate how the inductive scheme can be applied to uniformly rectifiable
sets by proving \eqref{SF-BIS-ann} in Theorem~\ref{UR-rest}.

The formulation of this result requires the notion of Hausdorff measure 
in the quasi-metric setting. Specifically, 
let $\mathscr{H}_{{\mathscr{X}}\!,\,\rho_{\#}}^d$ denote the $d$-dimensional 
Hausdorff measure on $(\mathscr{X},\rho_{\#})$
(see \cite[Definition 4.70]{MMMM-G}), and for any closed subset $A$ of $(\mathscr{X},\tau_\rho)$, let $\mathscr{H}_{{\mathscr{X}}\!,\,\rho_{\#}}^d\lfloor A$ denote the measure given by the restriction of $\mathscr{H}_{{\mathscr{X}}\!,\,\rho_{\#}}^d$ to $A$. 
We begin by defining what it means for the set $E$ to have 
big pieces of square function estimate.

\begin{definition}\label{sjvs-ann}
The set $E\subseteq{\mathscr{X}}$ is said to have big pieces of square function 
estimate (BPSFE) relative to the kernel $\theta$ if there exist three constants $\eta,C_1,C_2\in(0,\infty)$ with the property that for each ${Q\in{\mathbb{D}}(E)}$ 
there exists a closed subset $E_Q$ of $(\mathscr{X},\tau_\rho)$ such that $\bigl(E_Q,\rho\bigl|_{E_Q},
{\mathscr{H}}_{{\mathscr{X}}\!,\,\rho_{\#}}^d\lfloor E_{Q} \bigr)$ is a 
$d$-dimensional ADR space with ADR constant less than or equal to $C_1$, 
and which satisfies
\begin{align}\label{yvg2-ann}
{\mathscr{H}}_{{\mathscr{X}}\!,\,\rho_{\#}}^d(E_Q\cap Q)
\geq\eta\,{\mathscr{H}}_{{\mathscr{X}}\!,\,\rho_{\#}}^d(Q)
\end{align}
as well as
\begin{align}\label{avhai2-ann}
\begin{array}{c}
\displaystyle
\int_{\mathscr{X}\setminus E_Q}|\Theta_{E_Q} f(x)|^2\,
{\rm dist}_{\rho_{\#}}(x,E_Q)^{2\upsilon-(m-d)}\,d\mu(x)
\leq C_2\int_{E_Q} |f|^2\ d{\mathscr{H}}_{{\mathscr{X}}\!,\,\rho_{\#}}^d\lfloor E_{Q},
\end{array}
\end{align}
for all $f\in L^2(E_Q,{\mathscr{H}}_{{\mathscr{X}}\!,\,\rho_{\#}}^d\lfloor E_{Q})$, 
where $\Theta_{E_Q}$ is the operator associated with $E_Q$ as in \eqref{operator-A-ann}.
The constants $\eta,C_1,C_2$ will collectively be referred to as the BPSFE character 
of the set $E$.
\end{definition}

We now state and sketch the proof of our second main result. 

\begin{theorem}\label{Thm:BPSFtoSF}
If the set $E\subseteq{\mathscr{X}}$ has {\rm BPSFE} relative to the kernel 
$\theta$, then there exists a constant $C\in(0,\infty)$, depending only on 
$\rho$, $m$, $d$, $\upsilon$, $C_{\theta}$, the {\rm BPSFE} character of $E$, and the {\rm ADR} constants of $E$ and ${\mathscr{X}}$, such that 
\begin{align}\label{vlnGG-ann}
\int_{\mathscr{X}\setminus E}|\Theta_E f(x)|^2\,
\delta_E(x)^{2\upsilon-(m-d)}\,d\mu(x) 
\leq C\int_E|f|^2\,d{\mathscr{H}}_{{\mathscr{X}}\!,\,\rho_{\#}}^d,
\end{align}
for all $f\in L^2(E,{\mathscr{H}}_{{\mathscr{X}}\!,\,\rho_{\#}}^d\lfloor E)$.
\end{theorem}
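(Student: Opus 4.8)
The plan is to derive the square function estimate \eqref{vlnGG-ann} for $E$ from the local $T(b)$ theorem, Theorem~\ref{Thm:localTb}, by converting the big pieces $\{E_Q\}_{Q\in\mathbb{D}(E)}$ furnished by Definition~\ref{sjvs-ann} into a system $\{b_Q\}_{Q\in\mathbb{D}(E)}$ of test functions on $E$ that verify hypotheses~(1)--(3) of that theorem. It is convenient to first normalize $\sigma=\mathscr{H}^{d}_{\mathscr{X}\!,\,\rho_{\#}}\lfloor E$; this is harmless, since any two $d$-dimensional ADR measures on $E$ are mutually comparable with constants controlled by the ADR constants, so the estimate to be proved is unaffected. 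Two elementary facts about the big pieces will be used repeatedly. First, since each $E_Q$ is $d$-dimensional ADR and $d<m$, a standard covering argument (using the upper ADR bounds of $\mathscr{X}$ and $E_Q$) shows $\mu(E_Q)=0$, so every integral $\int(\cdots)\,d\mu$ over a subset of $\mathscr{X}\setminus E$ is unchanged if one additionally deletes $E_Q$. Second, because $E_Q\cap Q\subseteq E$ and $\sigma=\mathscr{H}^{d}_{\mathscr{X}\!,\,\rho_{\#}}\lfloor E$, the measures $\sigma$ and $\mathscr{H}^{d}_{\mathscr{X}\!,\,\rho_{\#}}\lfloor E_Q$ coincide on $E_Q\cap Q$; hence, writing $g_Q:=\mathbf{1}_{E_Q\cap Q}$, regarded either as a function on $E$ or on $E_Q$, one has $\Theta_E g_Q(x)=\Theta_{E_Q}g_Q(x)$ for every $x\in\mathscr{X}\setminus(E\cup E_Q)$, where $\Theta_{E_Q}$ is the operator attached to $E_Q$ as in \eqref{operator-A-ann}.

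The natural first attempt is $b_Q:=g_Q=\mathbf{1}_{E_Q\cap Q}$. With this choice, hypothesis~(1) is immediate from $\int_E|b_Q|^2\,d\sigma=\sigma(E_Q\cap Q)\le\sigma(Q)$, and hypothesis~(2) holds with $\widetilde{Q}:=Q$ and $c_0:=1$, because $\big|\int_Q b_Q\,d\sigma\big|=\mathscr{H}^{d}_{\mathscr{X}\!,\,\rho_{\#}}(E_Q\cap Q)\ge\eta\,\mathscr{H}^{d}_{\mathscr{X}\!,\,\rho_{\#}}(Q)=\eta\,\sigma(Q)$ by \eqref{yvg2-ann}. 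Essentially all the difficulty lies in hypothesis~(3). Using the two facts above, $\Theta_E b_Q=\Theta_{E_Q}g_Q$ holds $\mu$-almost everywhere on $T_E(Q)$, and the square function estimate \eqref{avhai2-ann} for $E_Q$ applied to $g_Q$ bounds $\int_{\mathscr{X}\setminus E_Q}|\Theta_{E_Q}g_Q(x)|^2\,{\rm dist}_{\rho_{\#}}(x,E_Q)^{2\upsilon-(m-d)}\,d\mu(x)$ by $C_2\,\|g_Q\|^2_{L^2(E_Q)}=C_2\,\mathscr{H}^{d}_{\mathscr{X}\!,\,\rho_{\#}}(E_Q\cap Q)$, which is $\lesssim\ell(Q)^d\approx\sigma(Q)$ by the ADR property of $E_Q$. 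Thus hypothesis~(3) would follow at once if, on $T_E(Q)$, the weight ${\rm dist}_{\rho_{\#}}(\,\cdot\,,E_Q)^{2\upsilon-(m-d)}$ dominated $\delta_E(\,\cdot\,)^{2\upsilon-(m-d)}$. Decomposing $T_E(Q)=\bigcup_{Q'\subseteq Q}\mathcal{U}_{Q'}$ into its Whitney layers, on each of which $\delta_E(\,\cdot\,)\approx\ell(Q')$, one checks that this domination does hold on every layer on which ${\rm dist}_{\rho_{\#}}(\,\cdot\,,E_Q)\approx\delta_E(\,\cdot\,)$, so the contribution of those layers is already $\lesssim\sigma(Q)$.

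The hard part is the remaining Whitney layers, those over sub-cubes $Q'$ for which ${\rm dist}_{\rho_{\#}}(\,\cdot\,,E_Q)$ and $\delta_E(\,\cdot\,)$ fail to be comparable on $\mathcal{U}_{Q'}$ --- either because $E_Q$ is locally far from the region while $E$ is close (i.e.\ $Q'$ is far from $E_Q$ relative to $\ell(Q')$), or because $E_Q$ protrudes into $\mathscr{X}\setminus E$ near $\mathcal{U}_{Q'}$. On the first type the mismatch is benign exactly when $2\upsilon-(m-d)\ge0$, and on the second exactly when $2\upsilon-(m-d)\le0$; in the complementary borderline cases the $E_Q$-estimate \eqref{avhai2-ann} does not by itself suffice, and, $\mathbf{1}_{E_Q\cap Q}$ carrying no cancellation, the only pointwise bound available from the decay condition \eqref{hszz-A-ann}, namely $|\Theta_E b_Q|\lesssim\delta_E^{-\upsilon}$, is borderline divergent once the weight is inserted and summed over all scales. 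The remedy is not to treat $Q$ in isolation but to refine the test function: one runs a stopping-time/corona decomposition of $Q$ --- $b_Q$ becomes, roughly, a sum of indicators $\mathbf{1}_{E_R\cap R}$ over the maximal sub-cubes $R\subseteq Q$ at which $E_Q$ ceases to be a good proxy, plus a top piece --- and exploits the geometric decay $\sum_{R}\sigma(R)\le(1-\eta)\,\sigma(Q)$ of the mass relegated to successive generations (valid because $E_Q\cap Q$ occupies an $\eta$-fraction of $Q$ and is disjoint from $\bigcup_R R$). This geometric gain is what drives the inductive scheme, and with $\{b_Q\}$ so constructed, hypotheses~(1)--(3) hold and Theorem~\ref{Thm:localTb} delivers \eqref{vlnGG-ann} with constants depending only on the quantities listed in the statement. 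We expect the main obstacle to be precisely this construction: reconciling, uniformly over all scales and locations and with correct book-keeping of the sign-indefinite exponent $2\upsilon-(m-d)$, the intrinsic weight $\delta_E$ appearing in the conclusion for $E$ with the weights ${\rm dist}_{\rho_{\#}}(\,\cdot\,,E_R)$ native to the hypothesis on the big pieces. Granting that, the remaining steps are routine applications of the ADR property and the Whitney/dyadic structure recalled above.
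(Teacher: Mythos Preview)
Your setup matches the paper exactly: you take $b_Q:=\mathbf{1}_{E_Q\cap Q}$, verify hypotheses~(1) and~(2) of Theorem~\ref{Thm:localTb} correctly, and correctly isolate the region $A$ on which $\delta_{E_Q}\approx\delta_E$, where hypothesis~(3) follows directly from \eqref{avhai2-ann}. Where you part ways with the paper is in the treatment of the complementary region $T_E(Q)\setminus A$.

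You assert that on the bad layers ``the only pointwise bound available from the decay condition \eqref{hszz-A-ann}, namely $|\Theta_E b_Q|\lesssim\delta_E^{-\upsilon}$, is borderline divergent,'' and on this basis abandon the simple $b_Q$ in favor of a corona-type refinement that you do not actually construct. This diagnosis is where the argument goes wrong. Because $b_Q$ is supported in $E_Q\cap Q\subseteq E_Q$ (and $E_Q$ is $d$-ADR), integrating the size condition over $E_Q$ yields the sharper pointwise bound $|\Theta_E b_Q(x)|\lesssim\delta_{E_Q}(x)^{-\upsilon}$. On the piece $\{x\in T_E(Q):\delta_{E_Q}(x)>C_A\delta_E(x)\}$ this is strictly better than the bound you used, and reduces hypothesis~(3) on that piece to the purely geometric Carleson estimate
\[
\int_{\{x\in T_E(Q):\,\delta_{E_Q}(x)>C_A\delta_E(x)\}}
\delta_{E_Q}(x)^{-2\upsilon}\,\delta_E(x)^{2\upsilon-(m-d)}\,d\mu(x)\;\lesssim\;\sigma(Q),
\]
which holds for $C_A$ large enough (the ampleness condition \eqref{yvg2-ann} forces the bad set to be thin in a Carleson sense). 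On the remaining piece $\{\delta_{E_Q}(x)<C_A^{-1}\delta_E(x)\}$ one uses instead that $b_Q$ is supported in $E$, together with the fact that this set lies in a tubular neighborhood of the $d$-ADR set $E_Q$ inside $\mathscr{X}\setminus E$; a companion Carleson measure estimate (with the roles of the two distance functions interchanged) then closes hypothesis~(3). In both cases the sign of $2\upsilon-(m-d)$ is irrelevant: the gain comes from the geometry of the bad set, not from comparing powers.

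So the paper keeps $b_Q=\mathbf{1}_{E_Q\cap Q}$ throughout; no stopping-time or corona construction is needed. Your proposed refinement is left as a sketch (``we expect the main obstacle to be precisely this construction''), and as written it is both unnecessary and incomplete: you have not specified the test functions, nor verified any of (1)--(3) for them. The missing idea is the pair of geometric Carleson measure estimates on $T_E(Q)\setminus A$, which replace your inductive mechanism entirely.
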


\begin{proof}
For each $Q\in{\mathbb{D}}(E)$ there exists $E_Q\subseteq\mathscr{X}$ satisfying \eqref{yvg2-ann}-\eqref{avhai2-ann}, since $E$ has BPSFE relative to the 
kernel~$\theta$. We then define the function $b_Q:E\rightarrow{\mathbb{R}}$ 
by setting
\begin{align}\label{BBss-ann}
b_Q(y):={\mathbf{1}}_{Q\cap E_Q}(y),\qquad\forall\,y\in E.
\end{align}
The strategy for proving \eqref{vlnGG-ann} is to invoke Theorem~\ref{Thm:localTb}
for the family $\{b_Q\}_{Q\in{\mathbb{D}}(E)}$, and as such, it suffices to verify conditions (1)--(3) in Theorem~\ref{Thm:localTb}. Condition (1) is immediate, 
and condition (2) with $\widetilde{Q}:=Q$ is a consequence of \eqref{yvg2-ann}. 
To verify condition (3), we introduce a constant $C_A\in(1,\infty)$ to be chosen 
later, and the set
\begin{equation}
A:=\big\{x\in{\mathscr{X}}:\,
C_A^{-1}\delta_E(x)\leq\delta_{E_Q}(x)\leq C_A\delta_E(x)\big\},
\end{equation}
in order to write
\begin{align}\label{KLGB}
\int_{T_E(Q)}\big|\Theta_E & b_Q(x)\big|^2\delta_E(x)^{2\upsilon-(m-d)}\,d\mu(x) 
=I_{\mathscr{X}\setminus A}+I_A,
\end{align}
where 
\begin{eqnarray}\label{KLGB.222}
I_{\mathscr{X}\setminus A}:=\int_{T_E(Q)\setminus A} 
\big|\Theta_E b_Q(x)\big|^2 \,\delta_E(x)^{2\upsilon-(m-d)}\,d\mu(x),
\\[4pt]
I_A:=\int_{T_E(Q)\cap A}\big|\Theta_E b_Q(x)\big|^2 \,\delta_E(x)^{2\upsilon-(m-d)}
\,d\mu(x).
\label{KLGB.333}
\end{eqnarray}
The estimate for $I_{\mathscr{X}\setminus A}$ requires choosing $C_A$ 
sufficiently large, in accordance with the ADR geometry of~$E$ and the 
decay of the kernel~$\theta$. The idea is to rely on a pointwise bound 
for $\Theta_E b_Q$ and Carleson measure estimates of a purely geometric nature.
More specifically, we prove that a judicious choice of $C_A$ gives 
\begin{eqnarray}\label{gwo}
\int_{\{x\in T_E(Q):\,\delta_{E_Q}(x)>C_A\delta_E(x)\}}
\delta_{E_Q}(x)^{-2\upsilon}\delta_E(x)^{2\upsilon-(m-d)}\,d\mu(x)\leq C\sigma(Q),
\end{eqnarray}
plus a Carleson measure estimate similar in nature (involving a suitable 
choice of the powers of the distance functions) on the piece 
$\{x\in T_E(Q):\,\delta_{E_Q}(x)<C_A^{-1}\delta_E(x)\}$. As regards 
$I_{A}$, we use~\eqref{avhai2-ann} to obtain (with ${\bf 1}_A$ denoting the 
characteristic function of $A$)
\begin{align}\label{KLGB.3}
\begin{split}
I_A &=\int_{T_E(Q)\setminus E_Q}\big|\Theta_{E_Q} b_Q(x)\big|^2{\mathbf{1}}_A(x)\,
\delta_E(x)^{2\upsilon-(m-d)}\,d\mu(x)
\\[4pt]
&\lesssim\int_{\mathscr{X}\setminus E_Q}\big|\Theta_{E_Q}b_Q(x)\big|^2\,
\delta_{E_Q}(x)^{2\upsilon-(m-d)}\,d\mu(x)
\\[4pt]
&\lesssim\int_{E_Q}|b_Q|^2\,d{\mathscr{H}}_{{\mathscr{X}}\!,\,\rho_{\#}}^d\lfloor E_{Q} \lesssim {\mathscr{H}}_{{\mathscr{X}}\!,\,\rho_{\#}}^d(Q),
\end{split}
\end{align}
as desired.
\end{proof}

We now prove \eqref{SF-BIS-ann} in Theorem~\ref{UR-rest} by combining 
Theorem~\ref{Thm:BPSFtoSF} with a characterization of uniform rectifiability 
obtained recently by J.~Azzam and R.~Schul in \cite{AS}.

\begin{proof}[Proof of \eqref{SF-BIS-ann} in Theorem~\ref{UR-rest}]
To get started recall that the set ${\Sigma}$ is uniformly rectifiable, so by 
the characterization of such sets in~\cite[Corollary~1.7]{AS}, it follows that 
$\Sigma$ has big pieces of big pieces of Lipschitz graphs (BP)$^2$LG. The next step 
is to prove estimate \eqref{SF-BIS-ann} in the case when $E$ is a Lipschitz graph
in ${\mathbb{R}}^{n+1}$ and this is achieved by building on earlier work in \cite{CMcM},~\cite{Ho},~\cite{HL}. We conclude that $\Sigma$ has big pieces 
of big pieces of square function estimates, i.e., (BP)$^2$SFE. We shall not 
define (BP)$^2$LG nor (BP)$^2$SFE here, but both should be understood in a natural 
manner. Theorem~\ref{Thm:BPSFtoSF} can be 
iterated to show that (BP)$^2$SFE implies BPSFE, which in turn implies that 
the square function estimate in \eqref{vlnGG-ann} holds, so \eqref{SF-BIS-ann} follows 
by applying \eqref{vlnGG-ann} with $(d,m,\mathscr{X},E,\theta,\Theta)
=(n,n+1,\mathbb{R}^{n+1},\Sigma,\nabla K,\nabla\mathcal{T})$.
\end{proof}

\subsection{$L^p$ square function estimates}\label{SS2.3}

We now consider $L^p$ versions of the $L^2$ square function estimates 
considered above for the integral operator $\Theta_E$. We list three 
extrapolation theorems and show how they can be applied to uniformly 
rectifiable sets by proving \eqref{SF-BIS-10} in Theorem~\ref{UR-rest}.
The first states that $L^2$ square function 
estimates follow automatically from weak-$L^p$ square function estimates for 
any $p\in(0,\infty)$. The second and third provide a range of sufficient conditions 
for $L^p$, weak-$L^p$, and Hardy space $H^p$ square function estimates to hold.

For any $\kappa>0$, consider the nontangential approach region
\begin{align}\label{TLjb}
\Gamma_\kappa(x):=\bigl\{y\in{\mathscr{X}}\setminus E:\,
\rho_{\#}(x,y)<(1+\kappa)\,\delta_E(y)\bigr\},
\qquad\forall\,x\in E.
\end{align}
The following is the first extrapolation theorem.

\begin{theorem}\label{VGds-L2XXX-ann} 
If $\kappa,p,C_o$ are finite positive constants such that for every 
$z\in E$ and $r>0$ the surface ball ${\Delta:=E\cap B_{\rho_{\#}}(z,r)}$ satisfies
\begin{align}\label{dtbh-L2iii}
\sigma\left(\Bigl\{x\in E:\,
\int_{\Gamma_{\kappa}(x)}|(\Theta_E{\mathbf{1}}_{\Delta})(y)|^2
\,\delta_E(y)^{2\upsilon-m}\,d\mu(y)>\lambda^2\Bigr\}\right)
\leq C_o\lambda^{-p}\sigma(\Delta),\quad\forall\,\lambda>0,
\end{align}
then there exists $C\in(0,\infty)$ which depends only on $\kappa,p,C_o$ 
and finite positive background constants (including ${\rm diam}_\rho(E)$ 
in the case when $E$ is bounded) such that 
\begin{align}\label{k-tSSiii-ann}
\int_{\mathscr{X}\setminus E}
|(\Theta_E f)(x)|^2\delta_E(x)^{2\upsilon-(m-d)}\,d\mu(x)
\leq C\int_E|f(x)|^2\,d\sigma(x),\qquad\forall\,f\in L^2(E,\sigma).
\end{align}
\end{theorem}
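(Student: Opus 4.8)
The plan is to deduce \eqref{k-tSSiii-ann} from the local $T(b)$ theorem (Theorem~\ref{Thm:localTb}) by manufacturing, for each dyadic cube $Q\in{\mathbb{D}}(E)$, a test function $b_Q$ satisfying hypotheses (1)--(3) of that theorem. The natural first guess is $b_Q:={\mathbf{1}}_{\Delta_Q}$, where $\Delta_Q$ is a surface ball comparable to $Q$ (say $\Delta_Q:=E\cap B_{\rho_\#}(x_Q,C\ell(Q))$ for a center $x_Q$ of $Q$ and a suitable geometric constant $C$). With this choice, condition (1) is immediate from the ADR property, since $\int_E|b_Q|^2\,d\sigma=\sigma(\Delta_Q)\lesssim\sigma(Q)$, and condition (2) holds with $\widetilde{Q}:=Q$ because $\int_Q b_Q\,d\sigma=\sigma(Q)$ (the cube $Q$ being contained in $\Delta_Q$). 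The real work is condition (3): we must bound $\int_{T_E(Q)}|(\Theta_E{\mathbf{1}}_{\Delta_Q})(x)|^2\,\delta_E(x)^{2\upsilon-(m-d)}\,d\mu(x)$ by $C_0\sigma(Q)$, and this is where hypothesis \eqref{dtbh-L2iii} must be brought to bear.

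The bridge from the weak-$L^p$ bound \eqref{dtbh-L2iii} to the $L^2$ Carleson-type bound in condition (3) is a standard good-$\lambda$/distribution-function argument localized over $E$, combined with the comparison between the tent $T_E(Q)$ and a union of nontangential cones $\Gamma_\kappa(x)$ with vertices $x\in cQ$. Concretely, one uses the fact that, for an appropriate aperture $\kappa$ and an appropriate dilate, $T_E(Q)$ is contained in $\bigcup_{x\in\Delta_Q}\Gamma_\kappa(x)$, so by Fubini and the ADR regularity of $\sigma$ one has a pointwise-in-$x$ comparison
\begin{align}\label{tent-cone-comp}
\int_{T_E(Q)}|(\Theta_E{\mathbf{1}}_{\Delta_Q})(y)|^2\,\delta_E(y)^{2\upsilon-(m-d)}\,d\mu(y)
\lesssim\int_{\Delta_Q}\Bigl(\int_{\Gamma_\kappa(x)}|(\Theta_E{\mathbf{1}}_{\Delta_Q})(y)|^2\,\delta_E(y)^{2\upsilon-m}\,d\mu(y)\Bigr)d\sigma(x),
\end{align}
the extra factor $\delta_E(y)^{d}$ being absorbed by $\sigma(E\cap B_{\rho_\#}(y,c\,\delta_E(y)))\approx\delta_E(y)^{d}$ when integrating the cone indicator in $x$. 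Writing $F(x):=\bigl(\int_{\Gamma_\kappa(x)}|(\Theta_E{\mathbf{1}}_{\Delta_Q})(y)|^2\,\delta_E(y)^{2\upsilon-m}\,d\mu(y)\bigr)^{1/2}$ for $x\in E$, hypothesis \eqref{dtbh-L2iii} applied with $(z,r)$ chosen so that $\Delta=\Delta_Q$ gives the distributional bound $\sigma(\{x\in E:F(x)>\lambda\})\le C_o\lambda^{-p}\sigma(\Delta_Q)$ for all $\lambda>0$. If $p\ge 2$ this immediately yields $\int_{\Delta_Q}F^2\,d\sigma\lesssim\sigma(\Delta_Q)\lesssim\sigma(Q)$ by integrating the distribution function against $\lambda\,d\lambda$ up to a threshold and using the trivial estimate $\sigma(\{F>\lambda\})\le\sigma(\Delta_Q)$ below the threshold. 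If $p<2$ the same scheme works after the usual self-improvement: one first upgrades \eqref{dtbh-L2iii} to the analogous weak bound with $\Delta_Q$ replaced by an arbitrary sub-ball, then runs a stopping-time/Calder\'on--Zygmund decomposition (as in the standard good-$\lambda$ proof that weak-$(p,p)$ self-improves in this square-function context) to boost the exponent past $2$. Feeding the resulting bound on $\int_{\Delta_Q}F^2\,d\sigma$ back through \eqref{tent-cone-comp} gives condition (3).

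With (1)--(3) verified for the family $\{{\mathbf{1}}_{\Delta_Q}\}_Q$, Theorem~\ref{Thm:localTb} outputs precisely \eqref{k-tSSiii-ann}, with the constant depending only on $\kappa,p,C_o$, the kernel constant $C_\theta$, and the ADR constants of $E$ and ${\mathscr{X}}$ (plus ${\rm diam}_\rho(E)$ when $E$ is bounded), as claimed. I expect the main obstacle to be the self-improvement step when $p<2$: making the stopping-time argument work uniformly at the level of an individual cube $Q$ (rather than globally on $E$) requires that hypothesis \eqref{dtbh-L2iii} be available not just for $\Delta_Q$ but for all surface balls $\Delta$, which it is, and then carefully tracking that the Calder\'on--Zygmund selection of "bad" sub-balls inside $\Delta_Q$ produces a geometrically summable family whose total $\sigma$-measure is controlled by $\sigma(Q)$. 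A secondary technical point is verifying the tent-versus-cones inclusion $T_E(Q)\subseteq\bigcup_{x\in\Delta_Q}\Gamma_\kappa(x)$ with the correct relationship between the aperture $\kappa$ appearing in \eqref{dtbh-L2iii}, the Whitney constants defining $\mathcal{U}_{Q'}$, and the quasi-metric constants $C_\rho,\widetilde{C}_\rho$; this is routine but must be done in the regularized quasi-distance $\rho_\#$ so that $\delta_E$ and $\Gamma_\kappa$ are the genuine objects appearing in the statement.
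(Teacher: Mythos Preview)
The paper is an announcement and does not include a proof of this theorem; the three extrapolation results in Section~\ref{SS2.3} are stated without argument, so there is no paper proof to compare against.

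Your strategy---feeding $b_Q=\mathbf{1}_{\Delta_Q}$ into Theorem~\ref{Thm:localTb} and verifying (1)--(3)---is the natural one and should succeed, but your treatment of condition~(3) has a gap worth flagging. The layer-cake computation you describe for ``$p\ge 2$'' actually converges only for $p>2$: at $p=2$ the tail $\int_1^\infty \lambda\cdot\lambda^{-2}\,d\lambda$ diverges, and for $p<2$ it is worse. So the genuinely easy case is $p>2$, and the John--Nirenberg/good-$\lambda$ self-improvement you sketch for $p<2$ is in fact needed for every $p\le 2$. That self-improvement is the real content of the proof: one must exploit the hypothesis for \emph{all} surface balls $\Delta\subseteq\Delta_Q$, not just $\Delta_Q$ itself, together with the observation that on a cone $\Gamma_\kappa(x)$ truncated at height $\ell(\Delta)$ the function $\Theta_E\mathbf{1}_{\Delta_Q}$ differs from $\Theta_E\mathbf{1}_{\Delta}$ by something with a uniform pointwise bound (coming from the kernel size estimate \eqref{hszz-A-ann}). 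Iterating at dyadic levels in the usual John--Nirenberg manner promotes the weak-$L^p$ tail to geometric decay, after which all moments of $F$ over $\Delta_Q$ are controlled by $\sigma(\Delta_Q)$ and condition~(3) follows from your tent-to-cones inequality. The aperture issue you raise at the end is indeed routine, since square-function weak-type bounds for one aperture transfer to any other aperture with constants depending only on the ADR geometry.
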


The requirement in \eqref{dtbh-L2iii} is less restrictive than the 
standard weak-$L^p$ estimate 
\begin{align}\label{eqrem-ann}
\sup_{\lambda>0}\left[\lambda\cdot
\sigma\Bigl(\Bigl\{x\in E:\int_{\Gamma_{\kappa}(x)}|(\Theta_E f)(y)|^2
\delta_E(y)^{2\upsilon-m}\,d\mu(y)>\lambda^{2}\Bigr\}\Bigr)^{1/p}\right]
\leq C_o\|f\|_{L^{p}(E,\sigma)}
\end{align}
for every $f\in L^p(E,\sigma)$, since \eqref{dtbh-L2iii} follows by 
substituting $f={\mathbf{1}}_{\Delta}$ in \eqref{eqrem-ann}.

The next extrapolation theorem shows that a weak-$L^q$ square function estimate 
for any $q\in(0,\infty)$ implies that square functions are bounded from $H^p$ 
into $L^p$ for all $p\in(\frac{d}{d+\gamma},\infty)$, where $d$ is the 
dimension of $E$ and (recalling $\alpha_\rho$ from \eqref{Cro-ann} and 
$\alpha$ from \eqref{hszz-3-A-ann}) the constant
\begin{align}\label{WQ-tDD-ann}
\gamma:=\min\,\bigl\{\alpha_\rho,\alpha\bigr\}.
\end{align}
The theory of Hardy-Lebesgue spaces $H^p=H^p(E,\rho\bigl|_E,\sigma)$ 
for ADR subsets of a quasi-metric space has been developed in~\cite{MMMM-G}
for $p$ belonging to an interval containing $\big(\frac{d}{d+\alpha_\rho},\infty\big)$.
These spaces become $L^p(E,\sigma)$ when $p\in(1,\infty)$, and in the case when 
$p\in\big(\frac{d}{d+\alpha_\rho},1\big]$ they have an atomic characterization 
as in the work of R.~Coifman and G.~Weiss in \cite{CoWe77}, as well as a 
grand maximal function characterization akin to that established by 
R.~Mac\'{i}as and C.~Segovia in~\cite{MaSe79II}.

\begin{theorem}\label{VGds-2-ann}
Fix $\kappa>0$. Given $q\in(1,\infty)$ and $p\in\bigl(\frac{d}{d+\gamma},\infty\bigr)$, consider the estimate 
\begin{align}\label{kt-Dc-ann}
\left\|\Bigl(\int_{\Gamma_{\kappa}(x)}|(\Theta_E f)(y)|^q\,
\frac{d\mu(y)}{\delta_E(y)^{m-q\upsilon}}\Bigr)^{\frac{1}{q}}
\right\|_{L^p_x(E,\sigma)}\!\!\!
\leq C\|f\|_{H^p(E,\rho|_{E},\sigma)},\quad\forall\,f\in H^p(E,\rho|_{E},\sigma),
\end{align}
for some constant $C\in(0,\infty)$.
\begin{enumerate}
\item[(I)] Assume that $q\in(1,\infty)$ has the property that, for some constant $C_o\in(0,\infty)$, either 
\begin{align}\label{kt-Dc-BIS-ann}
\left\|\Bigl(\int_{\Gamma_{\kappa}(x)}|(\Theta_E f)(y)|^q\,
\frac{d\mu(y)}{\delta_E(y)^{m-q\upsilon}}\Bigr)^{\frac{1}{q}}
\right\|_{L^q_x(E,\sigma)}\!\!\!
\leq C_o\|f\|_{L^q(E,\sigma)},\quad\forall\,f\in L^q(E,\sigma),
\end{align}
or there exists $p_o\in(q,\infty)$ such that for every 
$f\in L^{p_o}(E,\sigma)$ there holds
\begin{align}\label{dtbjHT-ann}
\sup_{\lambda>0}\left[\lambda\cdot
\sigma\Bigl(\Bigl\{x\in E:\int_{\Gamma_{\kappa}(x)}|(\Theta_E f)(y)|^q\,
\frac{d\mu(y)}{\delta_E(y)^{m-q\upsilon}}>\lambda^{q}\Bigr\}\Bigr)^{1/p_o}\right]
\leq C_o\|f\|_{L^{p_o}(E,\sigma)}.
\end{align}
Then \eqref{kt-Dc-ann} holds for every $p\in\bigl(\frac{d}{d+\gamma},\infty\bigr)$.
\item[(II)] Assume that $q\in(1,\infty)$ is such that there exist $p_o\in(1,\infty)$ 
and a constant $C_o\in(0,\infty)$ such that \eqref{dtbjHT-ann} holds for every 
$f\in L^{p_o}(E,\sigma)$. Then \eqref{kt-Dc-ann} holds for every $p\in(1,p_o)$ and, 
in addition, for every $f\in L^1(E,\sigma)$ one has
\begin{align}\label{d-YD23-ann}
\sup_{\lambda>0}\left[\lambda\cdot
\sigma\Bigl(\Bigl\{x\in E:\int_{\Gamma_{\kappa}(x)}|(\Theta_E f)(y)|^q\,
\frac{d\mu(y)}{\delta_E(y)^{m-q\upsilon}}>\lambda^{q}\Bigr\}\Bigr)\right]
\leq C_o\|f\|_{L^1(E,\sigma)}.
\end{align}
\end{enumerate}
\end{theorem}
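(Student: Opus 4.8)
The plan is to realize the left-hand side of \eqref{kt-Dc-ann} as the $L^p(E,\sigma)$-norm of the sublinear operator
\[
(\mathbb{S}f)(x):=\Bigl(\int_{\Gamma_{\kappa}(x)}|(\Theta_E f)(y)|^q\,\frac{d\mu(y)}{\delta_E(y)^{m-q\upsilon}}\Bigr)^{1/q},\qquad x\in E,
\]
and to establish $H^p(E,\rho|_{E},\sigma)\to L^p(E,\sigma)$ and weak-type bounds for $\mathbb{S}$ by Calder\'on--Zygmund arguments adapted to the quasi-metric, arbitrary-codimension setting. Two observations set the stage. First, Fubini's theorem and the $d$-ADR property of $E$ give $\sigma(\{x\in E:y\in\Gamma_\kappa(x)\})\approx\delta_E(y)^{d}$, hence $\|\mathbb{S}f\|_{L^q(E,\sigma)}^q\approx\int_{\mathscr{X}\setminus E}|(\Theta_E f)(y)|^q\,\delta_E(y)^{q\upsilon-(m-d)}\,d\mu(y)$; thus \eqref{kt-Dc-BIS-ann} is simply a ``solid'' $L^q$ square function estimate of the type appearing in \eqref{G-UF-2}, and the same identity shows that the norms for distinct apertures $\kappa$ are comparable. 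Second, expanding $\Theta_E f$ through its kernel and using the decay \eqref{hszz-A-ann} and H\"older regularity \eqref{hszz-3-A-ann} of $\theta$ (together with $\delta_E(y)\approx\rho_{\#}(x,y)$ for $y\in\Gamma_\kappa(x)$, and the positivity $q\upsilon>0$ which forces $\int_{\Gamma_\kappa(x)\cap\{\delta_E\le R\}}\delta_E^{q\upsilon-m}\,d\mu\lesssim R^{q\upsilon}$ via the $m$-ADR property of $\mathscr{X}$), one checks that $\mathbb{S}$ enjoys Calder\'on--Zygmund-type kernel estimates, with the role of the size exponent played by $\upsilon$ and of the smoothness exponent by $\alpha$.

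The first step (``going down'') is a Calder\'on--Zygmund decomposition. Given $f\in L^1(E,\sigma)$ and $\lambda>0$, decompose $f=g+\sum_j b_j$ at height $\lambda$ relative to $(E,\rho_{\#},\sigma)$, with $\mathrm{supp}\,b_j\subseteq Q_j$, $\int_E b_j\,d\sigma=0$, $\|b_j\|_{L^1}\lesssim\lambda\,\sigma(Q_j)$, and $\sum_j\sigma(Q_j)\lesssim\lambda^{-1}\|f\|_{L^1}$. For the good part, one uses the hypothesis together with $\|g\|_{L^q}^q\lesssim\lambda^{q-1}\|f\|_{L^1}$ (when \eqref{kt-Dc-BIS-ann} is assumed) or $\|g\|_{L^{p_o}}^{p_o}\lesssim\lambda^{p_o-1}\|f\|_{L^1}$ (when only the weak bound \eqref{dtbjHT-ann} is assumed) to obtain $\sigma(\{\mathbb{S}g>\lambda\})\lesssim\lambda^{-1}\|f\|_{L^1}$. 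For each $b_j$, the mean-zero property and \eqref{hszz-3-A-ann} give a pointwise bound on $(\mathbb{S}b_j)(x)$ for $x\in E\setminus 2Q_j$ decaying in a negative power of $\rho_{\#}(x,Q_j)/\ell(Q_j)$, and summing over $j$ yields $\int_{E\setminus\bigcup_j 2Q_j}\mathbb{S}b_j\,d\sigma\lesssim\|f\|_{L^1}$. Together these prove the weak-$(1,1)$ bound \eqref{d-YD23-ann}, and Marcinkiewicz interpolation with the hypothesis yields \eqref{kt-Dc-ann} (with $H^p=L^p$) for $p\in(1,p_o)$, respectively for $p\in(1,q]$ when \eqref{kt-Dc-BIS-ann} is assumed. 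Combined with \eqref{d-YD23-ann}, this is exactly the content of part~(II).

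The second step handles the Hardy-space range $p\in\bigl(\tfrac{d}{d+\gamma},1\bigr]$ in part~(I). By the atomic (equivalently, grand-maximal-function) characterization of $H^p(E,\rho|_{E},\sigma)$ from \cite{MMMM-G}, \cite{CoWe77}, \cite{MaSe79II}, it suffices to prove $\|\mathbb{S}a\|_{L^p(E,\sigma)}\le C$ uniformly over $(p,\infty)$-atoms $a$ supported in cubes $Q_0$, and then to sum using the sublinearity of $\mathbb{S}$ and $p\le 1$. On $2Q_0$ one applies H\"older's inequality together with an $L^r\to L^r$ bound for $\mathbb{S}$ valid for some $r\in(1,p_o)$ from the first step and $\|a\|_{L^r}\le\sigma(Q_0)^{1/r-1/p}$. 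On the annulus $2^kQ_0\setminus 2^{k-1}Q_0$, $k\ge 1$, the pointwise estimate from the first step combined with $\|a\|_{L^1}\lesssim\sigma(Q_0)^{1-1/p}$ gives $(\mathbb{S}a)(x)\lesssim 2^{-k(d+\alpha)}\sigma(Q_0)^{-1/p}$, the decay rate being supplied by the kernel's H\"older exponent $\alpha$; hence $\int_{2^kQ_0\setminus 2^{k-1}Q_0}(\mathbb{S}a)^p\,d\sigma\lesssim 2^{kd-kp(d+\alpha)}$, which is summable in $k$ precisely for $p>\tfrac{d}{d+\alpha}$. Since moreover the $H^p$ scale on $(E,\rho|_{E},\sigma)$ is only available for $p>\tfrac{d}{d+\alpha_\rho}$ \cite{MMMM-G}, the effective lower endpoint is $p>\tfrac{d}{d+\gamma}$ with $\gamma=\min\{\alpha,\alpha_\rho\}$ as in \eqref{WQ-tDD-ann} --- this is where the stated range originates.

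The last step (``going up'') extends \eqref{kt-Dc-ann} to $L^p$ for all $p\in(q,\infty)$, which with the previous two steps completes part~(I). The tool is a good-$\lambda$ inequality relating $\mathbb{S}f$ to the Hardy--Littlewood maximal operator $M$ on $(E,\rho_{\#},\sigma)$: decompose the open set $\{x\in E:(\mathbb{S}f)(x)>\lambda\}$ into Whitney balls $B$, and on each write $f=f\mathbf{1}_{CB}+f\mathbf{1}_{E\setminus CB}$; the far part contributes to $\mathbb{S}f$ a quantity that is essentially constant on $B$ and bounded by $C\,Mf$ (using \eqref{hszz-A-ann} and an aperture change), while the local part is controlled in measure by the weak-$(1,1)$ bound from the first step. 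This yields $\sigma\bigl(\{\mathbb{S}f>2\lambda,\ Mf\le\varepsilon\lambda\}\bigr)\le C\varepsilon^{\tau}\sigma(\{\mathbb{S}f>\lambda\})$ for some $\tau>0$, hence $\|\mathbb{S}f\|_{L^p}\lesssim\|Mf\|_{L^p}\lesssim\|f\|_{L^p}$ for every $p>1$, once one reduces --- by a preliminary truncation and density argument --- to functions $f$ for which $\{\mathbb{S}f>\lambda\}$ is known a priori to have finite $\sigma$-measure (e.g.\ $f\in L^p\cap L^{p_o}$, dense in $L^p$). I expect this good-$\lambda$ step to be the principal obstacle, along with the geometric bookkeeping it shares with the estimates above: the object whose level set one decomposes is a \emph{conical} square function over a $d$-dimensional set sitting inside an $m$-dimensional space, so the local/far splitting must be reassembled cone by cone, and the geometry of $\Gamma_\kappa(x)\subseteq\mathscr{X}\setminus E$ must be controlled uniformly through the ADR structure of both $E$ and $\mathscr{X}$ and the quasi-triangle inequality, with the additional burden of establishing the a priori finiteness needed to start the argument.
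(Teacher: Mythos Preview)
The paper is an announcement and does \emph{not} contain a proof of Theorem~\ref{VGds-2-ann}; the statement is listed without argument, so there is no proof to compare your proposal against. The only hint the authors give is in the introduction, where they remark that ``deriving Hardy and $L^p$ square-function estimates is not unlike proving boundedness results for singular integral operators'' and refer to ``arguments akin to Calder\'on--Zygmund theory.'' Your three-step plan (Calder\'on--Zygmund decomposition to get weak-$(1,1)$ and interpolate below the initial exponent; uniform atomic estimates for the Hardy range $p\in(\tfrac{d}{d+\gamma},1]$; a good-$\lambda$ inequality to climb above) is precisely the standard Calder\'on--Zygmund template adapted to conical square functions, and it is fully consistent with that hint. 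So as far as the paper itself goes, your outline is in line with what the authors indicate.

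That said, since there is nothing concrete to compare with, let me flag one point in your sketch that deserves care. In the ``going up'' step you invoke a good-$\lambda$ inequality with the Hardy--Littlewood maximal function $Mf$ as comparison. For conical square functions over a set $E$ of codimension $m-d>0$, the ``far part'' contribution on a Whitney ball $B\subseteq E$ is not obviously pointwise dominated by $Mf$: the cone $\Gamma_\kappa(x)$ reaches arbitrarily far into $\mathscr{X}\setminus E$, and the usual mechanism that makes this work in the codimension-one Euclidean case is an aperture-change lemma of Coifman--Meyer--Stein type (showing that for $x,x'\in B$ the difference of $(\mathbb{S}(f{\bf 1}_{E\setminus CB}))(x)$ and $(\mathbb{S}(f{\bf 1}_{E\setminus CB}))(x')$ is controlled by $Mf(x')$, or that the far-part square function is dominated by the square function of a slightly wider aperture evaluated at a nearby point outside the level set). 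You allude to this (``an aperture change''), but in the quasi-metric, arbitrary-codimension setting this step is genuinely where the geometry of both ADR structures enters, and it is not merely bookkeeping. Your identification of this as the principal obstacle is apt; the a~priori finiteness you mention is handled, as you say, by density of $L^p\cap L^{p_o}$ (or $L^p\cap L^q$) together with the already-established strong bound at one exponent.
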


The conclusion~\eqref{kt-Dc-ann} in Theorem~\ref{VGds-2-ann} may be 
conveniently re-phrased by saying that the operator 
\begin{align}\label{ki-DUD-ann}
\delta_E^{\upsilon-m/q}\Theta_E:H^p(E,\rho|_{E},\sigma)
\longrightarrow L^{(p,q)}({\mathscr{X}},E)
\end{align}
is well-defined, linear and bounded, where $L^{(p,q)}(\mathscr{X},E)$ is a 
mixed norm space in the quasi-metric setting introduced in \cite{MMM} as a 
generalization of the tent spaces $T^p_q$ in ${\mathbb{R}}^{n+1}_{+}$ 
that originated with R.~Coifman, Y.~Meyer and E.~Stein in \cite{CoMeSt}
(see also \cite{BMMM} for related matters). 

\vskip 0.08in

Estimate \eqref{SF-BIS-10} in Theorem~\ref{UR-rest} now readily follows
by combining Theorem~\ref{VGds-2-ann} with \eqref{SF-BIS-ann}.

\begin{proof}[Proof of \eqref{SF-BIS-10} in Theorem~\ref{UR-rest}]
The ADR geometry of~$E$ implies that 
\begin{equation}
\left\|\Bigl(\int_{\Gamma_{\kappa}(x)}\big|(\Theta_E f)(y)\big|^2\,
\frac{d\mu(y)}{\delta_E(y)^{m-2\upsilon}}\Bigr)^{\frac{1}{2}}
\right\|_{L^2_x(E,\sigma)}^2\approx
\int_{\mathscr{X}\setminus E}\big|(\Theta_E f)(x)\big|^2 
\delta_E(x)^{2\upsilon-(m-d)}\,d\mu(x),
\end{equation}
uniformly for $f\in L^2(E,\sigma)$. Based on this and \eqref{SF-BIS-ann} 
(which has been established earlier) we conclude that the hypotheses in part 
$({\rm I})$ of Theorem~\ref{VGds-2-ann} hold with $q=2$ in the setting in which   $(d,m,\mathscr{X},E,\theta,\Theta)=(n,n+1,\mathbb{R}^{n+1},\Sigma,\nabla K, \nabla\mathcal{T})$. In turn, this yields \eqref{SF-BIS-10}, as wanted.
\end{proof}

We conclude with an extrapolation theorem that combines 
Theorems~\ref{VGds-L2XXX-ann} and~\ref{VGds-2-ann}.

\begin{theorem}\label{VGds-2.33-ann} 
Fix $\kappa>0$. If there exist $p_o\in(0,\infty)$ and a constant 
$C_o\in(0,\infty)$ such that
\begin{align}\label{GvBh-ann}
\sup_{\lambda>0}\left[\lambda\cdot
\sigma\Bigl(\Bigl\{x\in E:\int_{\Gamma_{\kappa}(x)}|(\Theta_E f)(y)|^2\,
\frac{d\mu(y)}{\delta_E(y)^{m-2\upsilon}}>\lambda^{2}\Bigr\}\Bigr)^{1/p_o}\right]
\leq C_o\|f\|_{L^{p_o}(E,\sigma)},
\end{align}
for all $f\in L^{p_o}(E,\sigma)$, then for each $p\in\bigl(\frac{d}{d+\gamma},\infty\bigr)$ there holds
\begin{align}\label{Cfrd-ann}
\left\|\Bigl(\int_{\Gamma_{\kappa}(x)}|(\Theta_E f)(y)|^2\,
\frac{d\mu(y)}{\delta_E(y)^{m-2\upsilon}}\Bigr)^{\frac{1}{2}}
\right\|_{L^p_x(E,\sigma)}\!\!\!\leq C\|f\|_{H^p(E,\rho|_{E},\sigma)},
\quad\forall\,f\in H^p(E,\rho|_{E},\sigma),
\end{align}
where $C\in(0,\infty)$ is a constant that is allowed to depend only on 
$p,C_o,\kappa,C_\theta$, and geometry. 
\end{theorem}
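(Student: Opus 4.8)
The plan is to obtain the conclusion by chaining the two extrapolation results already available: first upgrade the weak-$L^{p_o}$ square function bound in \eqref{GvBh-ann} to the full $L^2$ square function estimate \eqref{k-tSSiii-ann} via Theorem~\ref{VGds-L2XXX-ann}, and then feed that $L^2$ estimate into part~(I) of Theorem~\ref{VGds-2-ann} with $q=2$ to produce the Hardy–Lebesgue bound \eqref{Cfrd-ann} for all $p\in\bigl(\tfrac{d}{d+\gamma},\infty\bigr)$. The only real work is verifying that the hypotheses of each theorem are met, which in both cases amounts to a testing/localization reduction rather than a new estimate.

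First I would check the hypothesis of Theorem~\ref{VGds-L2XXX-ann}. That theorem requires, for every $z\in E$ and $r>0$ and the surface ball $\Delta=E\cap B_{\rho_{\#}}(z,r)$, the distributional bound \eqref{dtbh-L2iii} for $\Theta_E\mathbf 1_{\Delta}$. Since $\Delta$ is a bounded subset of $E$ with $\sigma(\Delta)<\infty$, the ADR property gives $\mathbf 1_{\Delta}\in L^{p_o}(E,\sigma)$ with $\|\mathbf 1_{\Delta}\|_{L^{p_o}(E,\sigma)}=\sigma(\Delta)^{1/p_o}$; substituting $f=\mathbf 1_{\Delta}$ into \eqref{GvBh-ann} then yields exactly \eqref{dtbh-L2iii} with the same constant $C_o$ (this is the same observation made in the paragraph following Theorem~\ref{VGds-L2XXX-ann}, applied with the surface-ball test function rather than a general $f$). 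Hence Theorem~\ref{VGds-L2XXX-ann} applies and gives \eqref{k-tSSiii-ann}, i.e.\ the $L^2$ square function estimate
\begin{equation*}
\int_{\mathscr{X}\setminus E}|(\Theta_E f)(x)|^2\,\delta_E(x)^{2\upsilon-(m-d)}\,d\mu(x)\leq C\int_E|f|^2\,d\sigma,\qquad\forall\,f\in L^2(E,\sigma),
\end{equation*}
with $C$ depending only on $p_o$, $C_o$, $\kappa$, and geometry.

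Next I would recast this $L^2$ estimate in tent-space form so that it matches hypothesis \eqref{kt-Dc-BIS-ann} of Theorem~\ref{VGds-2-ann}. By Fubini's theorem together with the ADR property of $E$ — exactly the computation carried out in the proof of \eqref{SF-BIS-10} above — one has, uniformly in $f\in L^2(E,\sigma)$,
\begin{equation*}
\left\|\Bigl(\int_{\Gamma_{\kappa}(x)}|(\Theta_E f)(y)|^2\,\frac{d\mu(y)}{\delta_E(y)^{m-2\upsilon}}\Bigr)^{\frac12}\right\|_{L^2_x(E,\sigma)}^2\approx\int_{\mathscr{X}\setminus E}|(\Theta_E f)(x)|^2\,\delta_E(x)^{2\upsilon-(m-d)}\,d\mu(x).
\end{equation*}
Combining this equivalence with the $L^2$ estimate just obtained shows that \eqref{kt-Dc-BIS-ann} holds with $q=2$. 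Therefore part~(I) of Theorem~\ref{VGds-2-ann} applies (with $q=2$, noting $2\in(1,\infty)$), and its conclusion \eqref{kt-Dc-ann} is precisely the desired estimate \eqref{Cfrd-ann}, valid for every $p\in\bigl(\tfrac{d}{d+\gamma},\infty\bigr)$, with the stated dependence of $C$ on $p,C_o,\kappa,C_\theta$ and geometry. The main obstacle in this argument is essentially bookkeeping: making sure the power of $\delta_E$ in the weak-type hypothesis \eqref{GvBh-ann} (which is written as $\delta_E^{m-2\upsilon}$ in the denominator, i.e.\ matching the tent-space normalization) lines up with the $\delta_E^{2\upsilon-m}$ weight in \eqref{dtbh-L2iii} and with the $\delta_E^{2\upsilon-(m-d)}$ weight in \eqref{k-tSSiii-ann}–\eqref{G-UF-2} after the Fubini/ADR conversion — the codimension-$d$ shift is absorbed precisely by that conversion, so no genuinely new estimate is needed and the proof is a clean concatenation of Theorems~\ref{VGds-L2XXX-ann} and~\ref{VGds-2-ann}.
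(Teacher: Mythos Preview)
Your proposal is correct and matches the paper's own approach: the paper explicitly introduces Theorem~\ref{VGds-2.33-ann} as a result that ``combines Theorems~\ref{VGds-L2XXX-ann} and~\ref{VGds-2-ann}'' and provides no further argument, so the concatenation you describe (test \eqref{GvBh-ann} on $f=\mathbf{1}_\Delta$ to obtain \eqref{dtbh-L2iii}, invoke Theorem~\ref{VGds-L2XXX-ann} to get \eqref{k-tSSiii-ann}, convert via Fubini/ADR to \eqref{kt-Dc-BIS-ann} with $q=2$, then apply part~(I) of Theorem~\ref{VGds-2-ann}) is exactly what is intended. One cosmetic correction: after substituting $f=\mathbf{1}_\Delta$ and raising to the $p_o$-th power, the constant in \eqref{dtbh-L2iii} is $C_o^{p_o}$ rather than $C_o$, but this is harmless for the stated dependence.
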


\end{document}